\newtheorem{theorem}{Theorem}
\newtheorem{lemma}[theorem]{Lemma}
\newtheorem{thm}[theorem]{Theorem}
\def\\{\cr}
\def\({\left(}
\def\){\right)}
\def\[{\left[}
\def\]{\right]}
\def\<{\langle}
\def\>{\rangle}
\def\cD{\mathcal D}
\def\cI{\mathcal I}
\def\cP{\mathcal P}
\def\Z{\mathbb{Z}}
\def\Q{\mathbb{Q}}
\def\mand{\qquad \mbox{and} \qquad}
\def\notdivides{\mathrel{\kern-3pt\not\!\kern3.5pt\bigm|}}
\begin{document}


\title{On the number of polynomials of bounded height that satisfy Dumas's criterion}

\author{
{\sc Randell Heyman}\\
{Department of Computing, Macquarie University} \\
{Sydney, NSW 2109, Australia}\\
{\tt randell@unsw.edu.au}
}

\date{ }
\maketitle

\begin{abstract}
We study integer coefficient polynomials of fixed degree and maximum height $H$, that are irreducible by Dumas's criterion. We call such polynomials \emph{Dumas polynomials}. We derive upper bounds on the number of Dumas polynomials, as $H \rightarrow \infty$.
We also show that, for a fixed degree, the density of Dumas polynomials in all irreducible integer coefficient polynomials is strictly less than 1.
\end{abstract}


\noindent
\section{Introduction}
The two most well-known polynomial irreducibility criteria based on coefficient primality divisibility are probably the Eisenstein criterion and the Dumas criterion. In the last decade and a half results regarding the density of polynomials that satisfy the Eisenstein criterion have been obtained. In particular,  papers by Dobbs and Johnson \cite{DoJo}, Dubickas \cite{Dub} and the author and Shparlinski \cite{Hey,Hey2}. In this paper we explore densities of polynomials that satisfy Dumas's criterion.  This criterion is a sufficient condition for polynomial irreducibility over $\Z$ (and hence $\Q$). It can be thought of as a generalization of the Eisenstein criterion since the Eisenstein criterion is an easy result of Dumas's criterion. The Dumas's criterion is based on the construction of a Newton diagram. Construction of a Newton diagram is explained in the book of Prasolov \cite{Pra}. The explanation is reproduced in slightly edited form below.

Let $p$ be a fixed prime, and let
\begin{align}\label{f(x)}
 f(x)=\sum_{i=0}^nA_ix^i \in \Z[x]
 \end{align}
  be such that $A_0A_n \neq 0$. Represent the nonzero coefficients of $f$ in the form $A_ip^{\alpha_i}$, where $\gcd{(A_i,p)}=1$. To every nonzero coefficient $A_ip^{\alpha_i}$ we assign a point in the plane with coordinates $(i,\alpha_i)$.

Let $P_0=(0,\alpha_0)$ and $P_1=(i_1,\alpha_1)$, where $i_1$ is the largest integer for which there are no points $(i,\alpha_i)$ below the line $P_0P_1$. Further, let $P_2=(i_2,\alpha_2)$, where $i_2$ is the largest integer for which there are no points $(i,\alpha_i)$ below the line $P_1P_2$, etc. The very last segment is of the form $P_{r-l},P_r$, where $P_r=(n,\alpha_n)$.

If some segments of the broken line $P_0\cdots P_r$ pass through points with integer coefficients, then such points will be also considered as vertices of the broken line. In this way, to the vertices $P_0\cdots P_r$, we add $s \ge 0$ more vertices. The resulting broken line $Q_0\cdots Q_{r+s}$ is called \emph{the Newton diagram of polygon $f$ (with respect to $p$)}. The segments $Q_iQ_{i+1}$ are called \emph{segments} of the Newton diagram.

We can now state Dumas's criterion. Let $f \in \Z[x]$ be such that neither the leading coefficient nor the constant is equal to zero. If the Newton diagram for $f$ with respect to any prime $p$ consists of precisely one segment, i.e. consists of a segment containing no points with integer coordinates other than the end points, then $f$ is irreducible. The proof of Dumas's criterion can be found in the book of Prasolov \cite{Pra}[Theorem~2.2.1]. We sometimes call a polynomial that satisfies Dumas's criterion a \emph{Dumas polynomial}.

For integers $n \ge 2$ and $H \ge 1$, let $\cD_n(H)$ be the number of Dumas polynomials of height at most $H$, that is, satisfying $\max\{|A_0|,\dots,|A_n|\}\le H$. We have already noted that the number of polynomials that satisfy the Eisenstein criterion, calculated by the author and Igor Shparlinski ~ $\cite{Hey}$, provides a lower bound on $\cD_n(H)$. Our main result gives an upper bound.

\begin{thm}\label{firstmain}
We have,
$$\cD_n(H) \le (2H)^{n+1}\tau_n + \begin{cases}
O\(H^2 (\log H)^2\),&\quad \text{if $n=2$};\\
O\(H^n\),&\quad  \text{if $n\ge 3$},
\end{cases}$$
where
$$\tau_n=\begin{cases}
1-\prod_{p~\textrm{prime}}\(1-\frac{1}{p}\)^2\(1+\frac{2}{p}\),&\quad \text{if $n=2$}; \\
1-\frac{1}{\zeta(n-1)},&\quad  \text{if $n \ge 3$}.
\end{cases}$$
\end{thm}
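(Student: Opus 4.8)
The plan is to count the complement, isolating the simplest obstruction to being a Dumas polynomial. A polynomial fails Dumas's criterion with respect to a prime $p$ whenever its Newton diagram has a segment containing an interior lattice point, and the cleanest way to force this is through the $p$-adic valuations of the two end coefficients $A_0$ and $A_n$ together with the behaviour of the intermediate coefficients. So first I would try to show that the count $\cD_n(H)$ is, up to the stated error term, the number of polynomials whose Newton diagram is a single segment for *every* prime $p$, and that the dominant way to fail this is a single local condition at a single prime.

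Let me think about the main term. The factor $(2H)^{n+1}$ is just the total number of coefficient vectors in the box $[-H,H]^{n+1}$, so $\tau_n$ must be (asymptotically) the *density* of polynomials that fail Dumas. The plan is therefore to compute, for each prime $p$, the density of coefficient vectors for which the Newton diagram with respect to $p$ has at least one segment with an interior integer point, and then to combine these local densities across primes by inclusion–exclusion, showing the events are asymptotically independent (which is where the Euler product over primes and the values $\prod_p(1-1/p)^2(1+2/p)$ and $1/\zeta(n-1)$ will come from). Concretely, I expect the relevant failure to be governed by the valuations $\alpha_0=\ord_p(A_0)$ and $\alpha_n=\ord_p(A_n)$: when the segment from $(0,\alpha_0)$ to $(n,\alpha_n)$ passes through an interior lattice point and all intermediate coefficients lie on or above that segment, Dumas fails. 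For $n\ge 3$ this should reduce to a divisibility condition forcing $\gcd$-type constraints, producing the $1/\zeta(n-1)$ term, while for $n=2$ the only interior point is the midpoint $(1,(\alpha_0+\alpha_2)/2)$, so the failure condition becomes $\alpha_0\equiv\alpha_2\pmod 2$ together with a condition on $A_1$, giving the more intricate local factor $(1-1/p)^2(1+2/p)$.

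The key steps, in order, would be: (i) fix a prime $p$ and determine exactly which valuation patterns $(\alpha_0,\dots,\alpha_n)$ yield a single-segment Newton diagram versus a reducible (interior-point) one; (ii) for each prime, count coefficient vectors modulo the relevant powers of $p$ realizing the failure, getting a local density; (iii) handle the $n=2$ case separately because the single interior point is the midpoint and the analysis of $A_1$ is delicate, accounting for the extra $(\log H)^2$ factor in the error term; (iv) pass from local densities to a global count by summing over primes, bounding the contribution of large primes (which forces the error terms $O(H^n)$ for $n\ge 3$, since high valuations require large coefficients and are rare), and controlling the overlap between failures at distinct primes.

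**The hard part will be** step (iv): converting the pointwise local conditions into a clean global asymptotic while rigorously controlling the error. The difficulty is twofold. First, a single coefficient vector can fail Dumas at many primes simultaneously, so I must show these failure events are sufficiently independent that inclusion–exclusion collapses to the stated Euler product rather than merely bounding by a union; the truncation of the product at primes up to roughly $H^{1/n}$ (beyond which the valuation constraints cannot be met inside the height box) is what should produce the main-term/error-term split. Second, for $n=2$ the interior-midpoint condition couples the parity of $\alpha_0-\alpha_2$ with a congruence on $A_1$, and teasing out the exact local factor $(1-1/p)^2(1+2/p)$ — as opposed to a naive $1/p$-type estimate — requires a careful case analysis of when the midpoint is itself a lattice point and whether $A_1$ lands on, above, or below the segment; this delicacy is precisely what generates the extra logarithmic factor in the $n=2$ error term. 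I would expect the rest, namely the local counting in steps (i)–(iii), to be routine lattice-point bookkeeping once the correct failure criterion is pinned down.
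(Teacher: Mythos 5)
Your proposal misreads the statement and then heads down a route the paper itself identifies as intractable. First, Theorem~\ref{firstmain} is only an \emph{upper bound}: $\tau_n$ is not the asymptotic density of Dumas polynomials (nor of their complement, as you assert at one point — note $\cD_n(H)$ counts the polynomials that \emph{satisfy} the criterion), and the known lower and upper bounds for that density are far apart, so a plan whose goal is to compute the exact density of single-segment Newton diagrams is aiming at an open problem rather than at the theorem. Second, you invert a quantifier: a Dumas polynomial needs a single primitive segment for \emph{some} prime, not for every prime; for every prime dividing neither $A_0$ nor $A_n$ the diagram is the flat segment from $(0,0)$ to $(n,0)$, which always contains interior lattice points. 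Hence ``not Dumas'' is an intersection over all primes of failure events, and your step (iv) — inclusion–exclusion over primes together with a proof of asymptotic independence — is exactly the approach the Comments section warns about: a polynomial can be Dumas with respect to several primes while exhibiting a \emph{different} single-segment diagram at each, and there is no known way to control these overlaps. You flag this as ``the hard part'' but offer no mechanism for getting through it, so the proposal has a genuine gap at its central step, and even if completed it would not yield the stated constants.

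The actual proof is far more elementary: it extracts a simple \emph{necessary} condition and counts the tuples satisfying it. Lemma~\ref{gcd middle} shows that for $n\ge 3$ a Dumas polynomial with nonzero middle coefficients must have $\gcd(A_1,\dots,A_{n-1})\ne 1$ (otherwise, for every prime some interior point sits at height $0$, which is incompatible with a single primitive segment from $(0,\alpha_0)$ to $(n,\alpha_n)$); Lemma~\ref{gcd ends} gives the analogous condition for $n=2$, namely that $A_0,A_1,A_2$ cannot be pairwise coprime. One then counts the tuples violating these coprimality conditions by quoting Nymann (density $1/\zeta(n-1)$ of coprime $(n-1)$-tuples) and T\'oth (density $\prod_p(1-1/p)^2(1+2/p)$ of pairwise coprime triples); that is the sole source of the Euler products in $\tau_n$, and the error terms are just the error terms of those two counting results together with the $O(H^n)$ (resp.\ $O(H^2)$) polynomials having a vanishing middle coefficient. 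Your plan never isolates such a necessary gcd condition, which is why it cannot produce the stated $\tau_n$.
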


\section{Notation}
Let $f(x)$ be as in \eqref{f(x)}. We will always assume that $A_n \ne 0$. We define the height of the polynomial $f$ as
\begin{equation*}
H(f) = \max_{i=0, \ldots, n} |A_i|.
\end{equation*}
As usual the Riemann zeta function is given by
$$\zeta(s)=\sum_{k=1}^\infty \frac{1}{k^s},$$
for all complex numbers $s$ whose real part is greater than 1.
We also recall that the notation $U=O(V)$ is equivalent to the assertion that the inequality $|U| \le c|V|$ holds for some constant $c>0$.

\section{Preparations}
\begin{lemma}\label{gcd ends}
Fix $n=2$. Suppose that $f(x)$ is as in \eqref{f(x)} with $H(f) \le H$ and $A_{1} \ne 0.$ If $f$ is a Dumas polynomial then
$\gcd(A_j,A_k) \ne 1$  for some distinct $j,k \in \{0,1,2\}$.
\end{lemma}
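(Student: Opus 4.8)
The plan is to translate the single-segment condition of Dumas's criterion, specialized to $n=2$, into arithmetic conditions on the $p$-adic valuations of the three coefficients, and then read off that the relevant prime divides two of them. Fix a prime $p$ witnessing that $f$ is a Dumas polynomial. Since $A_0A_1A_2\neq 0$, the three assigned points $(0,\alpha_0)$, $(1,\alpha_1)$, $(2,\alpha_2)$ are all well defined, where $\alpha_i=\ord_p(A_i)\ge 0$, and the Newton diagram is the lower convex hull of these points together with any lattice point lying on it. I would show that this diagram is a single segment with no interior lattice point precisely when (i) the middle point lies strictly above the chord joining the two outer points, that is $2\alpha_1>\alpha_0+\alpha_2$, and (ii) the midpoint $\left(1,\tfrac{\alpha_0+\alpha_2}{2}\right)$ of that chord fails to be a lattice point, that is $\alpha_0+\alpha_2$ is odd. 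For if $2\alpha_1\le\alpha_0+\alpha_2$ then the middle point is a vertex of the hull and the diagram has two segments, while if $\alpha_0+\alpha_2$ is even then $\left(1,\tfrac{\alpha_0+\alpha_2}{2}\right)$ is an interior lattice point that the construction promotes to a vertex, again yielding two segments.

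Granting (i) and (ii), the conclusion follows at once. From (i) together with $\alpha_0,\alpha_2\ge 0$ we get $\alpha_1>\tfrac{\alpha_0+\alpha_2}{2}\ge 0$, hence $\alpha_1\ge 1$ and $p\Div A_1$. From (ii) the sum $\alpha_0+\alpha_2$ is odd, so at least one of $\alpha_0,\alpha_2$ is a positive integer, giving $p\Div A_0$ or $p\Div A_2$. Thus $p$ divides two of the three coefficients, and the corresponding pair satisfies $\gcd(A_j,A_k)\ge p>1$ with $\{j,k\}=\{0,1\}$ or $\{j,k\}=\{1,2\}$, as required.

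I expect the only delicate point to be the verification of the equivalence in the first paragraph, and in particular keeping the inequality in (i) strict and correctly extracting the parity condition (ii) from the rule that integer points on a segment are promoted to vertices of the Newton diagram. Once that bookkeeping is settled the divisibility deduction is routine.
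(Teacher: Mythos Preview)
Your argument is correct. The paper proceeds by contraposition—assuming the three coefficients are pairwise coprime and then, for an arbitrary prime $p$, splitting into the cases $p\mid A_1$ and $p\nmid A_1$ to see that the Newton diagram always has at least two segments—whereas you argue directly, deriving the exact single-segment conditions $2\alpha_1>\alpha_0+\alpha_2$ and $\alpha_0+\alpha_2$ odd and reading off the common divisor from them. The two routes are essentially dual; yours is a bit more informative in that it pins down the full characterization for $n=2$ and shows that the non-coprime pair necessarily involves $A_1$.
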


\begin{proof}
Suppose there exists a polynomial $f$, as described above, with the property that $\gcd(A_j,A_k) = 1$ for some distinct $j,k \in \{0,1,2\}$. If for any prime $p$ we have $p\mid  A_{1}$ then clearly $p \nmid A_0$ and $p \nmid A_2$. So the Newton diagram with respect to $p$ consists of $2$ segments. Thus $f$ is not a Dumas polynomial. On the other hand, if for any prime $p$ we have $p \nmid A_{1}$ then the Newton diagram includes the point $(1,0)$. Thus the Newton diagram consists of more than one segment. So again $f$ is not a Dumas polynomial, completing the proof.
\end{proof}

\begin{lemma}\label{gcd middle}
Fix $n \ge 2$. Suppose $f(x)$ is as shown in \eqref{f(x)} with $H(f) \le H$ and $A_{1}A_{2}\cdots A_{n-1} \ne 0.$ If $f$ is a Dumas polynomial then
$\gcd(A_1,A_2,\ldots,A_{n-1}) \ne 1$.
\end{lemma}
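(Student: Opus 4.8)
The plan is to argue directly from the definition of a single-segment Newton diagram, using the one prime that witnesses the Dumas property. Since $f$ is a Dumas polynomial, there is a prime $p$ for which the Newton diagram of $f$ with respect to $p$ consists of a single segment. First I would fix this prime and write each coefficient as $A_i = A_i'\,p^{\alpha_i}$ with $p \nmid A_i'$, so that the point attached to $A_i$ is $(i,\alpha_i)$ and $\alpha_i \ge 0$. Because $A_0 A_n \ne 0$, the diagram begins at $(0,\alpha_0)$ and ends at $(n,\alpha_n)$, and the single-segment hypothesis forces the whole diagram to be the segment joining those two endpoints.

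The key step is to record what ``one segment'' says about the interior points. Since $A_1 A_2 \cdots A_{n-1} \ne 0$, every point $(i,\alpha_i)$ with $1 \le i \le n-1$ is actually present, and each must lie strictly above the line $\ell$ through $(0,\alpha_0)$ and $(n,\alpha_n)$; were any such point on or below $\ell$, it would sit on the lower convex hull and create an additional vertex, producing more than one segment. The height of $\ell$ at abscissa $i$ is $h_i = \frac{n-i}{n}\alpha_0 + \frac{i}{n}\alpha_n$.

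Next I would exploit that both endpoints sit at nonnegative height. Since $\alpha_0,\alpha_n \ge 0$ and $0 < i < n$, the weights $\frac{n-i}{n}$ and $\frac{i}{n}$ are positive, so $h_i \ge 0$. Combining the strict inequality $\alpha_i > h_i$ with $h_i \ge 0$ and the integrality of $\alpha_i$ gives $\alpha_i \ge 1$, i.e. $p \mid A_i$, for every $i = 1,\ldots,n-1$. Hence $p \mid \gcd(A_1,\ldots,A_{n-1})$, so this gcd is at least $p$ and in particular is not equal to $1$, which is exactly the claim.

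The conceptually delicate part is not the arithmetic but the correct reading of the single-segment condition: one must be sure that interior data points lie \emph{strictly} above the chord, so that a point lying exactly on the chord would already violate the single-segment requirement (it is a lattice point and would be added as a vertex), and one must use only the single witnessing prime $p$ rather than quantifying over all primes. Once that reading is pinned down, the nonnegativity of $h_i$ does all the work and no estimation is needed.
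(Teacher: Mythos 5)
Your proof is correct and is essentially the paper's argument read in the contrapositive direction: the paper assumes $\gcd(A_1,\dots,A_{n-1})=1$, finds for each prime $p$ an index $i$ with $\alpha_i=0$, and notes that the resulting point $(i,0)$ prevents a single segment --- which is precisely your observation that the chord joining $(0,\alpha_0)$ and $(n,\alpha_n)$ sits at nonnegative height, so every interior point must have $\alpha_i\ge 1$. You are right to treat the on-the-chord case explicitly (a lattice point on the segment becomes an extra vertex); the paper leaves that detail and the nonnegativity of the chord implicit, so your write-up is if anything slightly more careful.
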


\begin{proof}
Suppose $f$ is as described above with $\gcd(A_1,A_2,\ldots,A_{n-1})=1$. For any prime $p$ we must have $p \nmid A_i$ for some $1 \le i \le n-1$. So the Newton diagram for $f$ with respect to $p$ includes the point $(A_i,0)$. Thus the Newton diagram with respect to any prime $p$ does not consist of a single segment. Therefore $f$ is not a Dumas polynomial.
\end{proof}

\section{Proof of Theorem \ref{firstmain}}
Let $f(x)$ be as in \eqref{f(x)} with $H(f) \le H$.
We prove Theorem \ref{firstmain} for $n=2$ and $n \ge 3$ separately.

We start with the $n=2 $ case. To ease notation we use $\gcd^*(A_0,A_{1},A_{2}) \ne 1$ to mean that $A_0,A_{1}$ and $A_{2}$ are not pairwise coprime. That is, $\gcd(A_0,A_{1})\ne 1$ or $\gcd(A_0,A_{2})\ne 1$ or $\gcd(A_{1},A_{2})\ne 1$. We also use $\gcd_*(A_0,A_{1},A_{2}) = 1$ to mean that  $A_0,A_{1}$ and $A_{2}$ are pairwise coprime. That is, $\gcd(A_0,A_{1})=\gcd(A_0,A_{2})=\gcd(A_{1},A_{2})= 1$.

There are $O(H^{2})$ polynomials with $A_{0}A_{1}A_2=0.$ If we have $A_{0}A_{1}A_2 \ne 0$ then, by Lemma \ref{gcd ends}, the polynomial $f$ can only be a Dumas polynomial if $\gcd^*(A_0,A_1,A_2) \ne 1$. Therefore
\begin{align}\label{2}
\cD_{2}(H) &\le \sum_{\substack{1\le |A_0|,|A_{1}|,|A_{2}| \le H \\\gcd^*(A_0,A_1,A_2) \ne 1}}1+O(H^{2})\notag\\
&= \sum_{\substack{1\le A_0,A_{1},A_{2} \le H \\\gcd^*(A_0,A_1,A_2) \ne 1}}8+O(H^{2})\notag\\
&= (2H)^3-\sum_{\substack{1\le A_0,A_{1},A_{2} \le H \\\gcd_*(A_0,A_1,A_2) = 1}}8+O(H^{2}).
\end{align}
From the paper of T$\acute{\textrm{o}}$th~\cite{Tot} we have
\begin{align*}
\sum_{\substack{1\le |A_0|,|A_{1}|,|A_{2}| \le H \\\gcd_*(A_0,A_{1},A_{2})=1 }}1=H^3\prod_{p~\textrm{prime}} \(1-\frac{1}{p}\)^2\(1+\frac{2}{p}\)+O\(H^2(\log H)^2\),
\end{align*}
from which
\begin{align}\label{toth}
\sum_{\substack{1\le A_0,A_{1},A_{2} \le H \\\gcd_*(A_0,A_{1},A_{2}) = 1}}8&=(2H)^3\prod_{p~\textrm{prime}} \(1-\frac{1}{p}\)^2\(1+\frac{2}{p}\)+O\(H^2(\log H)^2\).
\end{align}
Substituting \eqref{toth} into \eqref{2} completes the proof for the $n=2$ case.

For the $n \ge 3$ case fix a $n\ge 3$.
There are $O(H^{n})$ polynomials for which $A_1A_2\cdots A_{n-1}=0.$ If $A_1A_2\cdots A_{n-1}\ne 0$ then, by Lemma \ref{gcd middle}, the polynomial $f$ can only be a Dumas polynomial if $\gcd(A_1,A_2,\ldots,A_{n-1}) \ne 1$. Therefore
\begin{align}\label{eventhm}
\cD_{n}(H)&\le \sum_{\substack{1\le|A_1|,|A_2|,\ldots,|A_{n-1}| \le H\\\gcd(A_1,A_2,\ldots,A_{n-1}) \ne 1 }}1+O(H^{n}).
\end{align}
We infer from Nymann~\cite{Nym} that
\begin{align}\label{coprime}
\sum_{\substack{1\le |A_1|,|A_2|,\ldots,|A_{n-1}|\le H\\\gcd(A_1,A_2,\ldots,A_{n-1}) \ne 1 }}1&=(2H)^{n-1}\(1-\frac{1}{\zeta(n-1)}\)+O(H^{n-2}).
\end{align}
Substituting \eqref{coprime} into \eqref{eventhm} completes the proof for the $n \ge 3$ case. Thus Theorem \ref{firstmain} is proven.

\section{Comments}
Let $\cP_n(H)$ be the number of polynomials of degree $n$ and maximum height $H$. Let $\cI_n(H)$ be the number of irreducible polynomials of degree $n$ and maximum height $H$. Two results immediately follow from Theorem \ref{firstmain}.

Firstly, we note that $\cP_n(H)=(2H)^{n+1}+O(H^n)$ and infer from Cohen ~\cite[Theorem~1]{Coh} that for $n \ge 2$,
$$\lim_{H \to \infty} \frac{\cI_n(H)}{\cP_n(H)}=1.$$
Thus, for $n\ge 2$,
$$\limsup_{H \rightarrow \infty}\frac{\cD_n(H)}{\cP_n(H)}=\limsup_{H \rightarrow \infty}\frac{\cD_n(H)}{\cI_n(H)}=\tau_n.$$

Secondly, $\tau_n<1$ for all $n\ge 2$, and so for $n \ge 2$,
$$\limsup_{H \rightarrow \infty} \frac{\cD_n(H)}{\cP_n(H)} =\limsup_{H \rightarrow \infty} \frac{\cD_n(H)}{\cI_n(H)}<1.$$

Table 1 shows calculated values of upper bounds on the limit superior of $\cD_n(H)/\cP_n(H)$ as $H$ goes to infinity, for various values of $n$. It also includes limit inferior calculations derived from a paper by the  author and Shparlinski~\cite{Hey}. Specifically, lower bounds on the limit inferior of $\cD_n(H)/\cP_n(H)$ as $H$ goes to infinity, for various values of $n$. All summations are over all primes less than 100,000.

\begin{table}[h]
\begin{center}
    \caption{Some upper and lower bounds on $\cD_n(H)/\cP_n(H)$ as $H \rightarrow \infty$}
   \medskip
\medskip
    \begin{tabular}{ | c | c | c |}
    \hline
    \textrm{$n$} & Lower bound & Upper bound \\ \hline
    $2$ & 0.1677 &0.7133\\ \hline

    $3$ & 0.0556 & 0.3922 \\ \hline

    $4$&0.0224& 0.1681  \\ \hline

    $5$&0.0099& 0.0766 \\ \hline
   $6$& 0.0046 & 0.0357 \\ \hline
    $7$&0.0022 & 0.0181\\ \hline
    $8$&0.0010&0.0079 \\ \hline
    $9$&0.0005 & 0.0049 \\ \hline
    $10$&0.0003&0.0020 \\ \hline
    \end{tabular}
  \end{center}
 \end{table}
\newpage
This prompts the following question. Is it possible to obtain tighter bounds or the exact values of
$$\liminf_{H \to \infty} \frac{\cD_n(H)}{\cP_n(H)}
\mand \limsup_{H \to \infty} \frac{\cD_n(H)}{\cP_n(H)}$$
(they most likely coincide)?

We also note that it is possible to find upper bounds on $$\limsup_{H \rightarrow \infty} \cD_n(H)/\cP_n(H)$$ by directly calculating the number of Dumas polynomials for an arbitrary single segment Newton diagram, and then summing over all possible single segment Newton diagrams. There are substantial problems using the inclusion exclusion principle with this approach; a Dumas polynomial with respect to more than one prime may exhibit a different Newton diagram for each of these primes. Whilst results for degree $n>3$ are obtainable without the inclusion exclusion principle, it has not been possible to find any results that are superior to Theorem \ref{firstmain}.

\section{Acknowledgement}
The author would like to thank Igor Shparlinski for numerous helpful suggestions.

\hrulefill
\newline
\newline
2010 \emph{Mathematical Subject Classification:} Primary 11R09.
\newline
\emph{Keywords:}~Irreducible polynomial, Dumas's criterion, coprimality.
\newline

\noindent{\hrulefill}
\end{document}